\newif\ifpdf
\numberwithin{equation}{section} \swapnumbers
\newtheorem{satz}{Satz}[section]
\newtheorem{theorem}[satz]{Theorem}
\newtheorem{proposition}[satz]{Proposition}
\newtheorem{assumption}[satz]{Assumption}
\newtheorem{definition}[satz]{Definition}
\newtheorem{remark}[satz]{Remark}
\newtheorem{example}[satz]{Example}
\newcommand{\bbr}{\mathbb{R}}
\newcommand{\bbn}{\mathbb{N}}
\newcommand{\bbp}{\mathbb{P}}
\newcommand{\calb}{\mathscr{B}}
\newcommand{\calf}{\mathscr{F}}
\newcommand{\calh}{\mathscr{H}}
\newcommand{\calz}{\mathscr{Z}}
\newcommand{\la}{\langle}
\newcommand{\ra}{\rangle}
\newcommand{\bbI}{\mathbbm{1}}
\begin{document}

\title[Mild solutions to stochastic partial differential equations]{An addendum to ``Mild solutions to semilinear stochastic partial differential equations with locally monotone coefficients''}
\author{Stefan Tappe}
\address{Albert Ludwig University of Freiburg, Department of Mathematical Stochastics, Ernst-Zermelo-Stra\ss{}e 1, D-79104 Freiburg, Germany}
\email{stefan.tappe@math.uni-freiburg.de}
\date{10 March, 2022}
\thanks{I am grateful to Michael R\"{o}ckner for fruitful discussions. I am also grateful to the referee for helpful comments. Moreover, I gratefully acknowledge financial support from the Deutsche Forschungsgemeinschaft (DFG, German Research Foundation) --- project number 444121509.}
\begin{abstract}
In this addendum we provide an existence and uniqueness result for mild solutions to semilinear stochastic partial differential equations driven by Wiener processes and Poisson random measures in the framework of the semigroup approach with locally monotone coefficients, where the semigroup is allowed to be pseudo-contractive. This improves an earlier paper of the author, where the equation was only driven by Wiener processes, and where the semigroup was only allowed to be a semigroup of contractions.
\end{abstract}
\keywords{Stochastic partial differential equation, variational approach, semigroup approach, pseudo-contractive semigroup, mild solution, monotonicity condition, coercivity condition}
\subjclass[2020]{60H15, 60H10}

\maketitle

\section{Introduction}

This note is an addendum to an earlier paper by the author \cite{Tappe-mon}. In the aforementioned article, we have provided an existence and uniqueness result for mild solutions to semilinear stochastic partial differential equations (SPDEs) driven by Wiener processes in the framework of the semigroup approach (see, for example \cite{Da_Prato, Atma-book}) under local monotonicity and coercivity conditions on the coefficients; see \cite[Thm. 2.6]{Tappe-mon}. The semigroup was assumed to be a semigroup of contractions. The goal of this note is to improve this existence and uniqueness result for SPDEs driven by Wiener processes and Poisson random measures and pseudo-contractive semigroups; see Theorem \ref{thm-SPDE} below. We will also present consequences for such SPDEs with Lipschitz type coefficients, where the Lipschitz constants may be random; see Propositions \ref{prop-SPDE} and \ref{prop-SPDE-Wiener}, and also Example \ref{example-mult}, where the coefficients have a multiplicative structure.

The essential idea for the proof of the existence and uniqueness result from \cite{Tappe-mon} was to utilize the ``method of the moving frame'', which has originally been presented in \cite{SPDE}. This method allows to extend the semigroup to a group on a larger Hilbert space, which provides a link between mild solutions to SPDEs and strong solutions to infinite dimensional stochastic differential equations (SDEs) on the larger Hilbert space. Regarding the SDE on the larger Hilbert space, we have used an existence and uniqueness result for SPDEs driven by Wiener processes in the framework of the variational approach; see \cite[Thm. 5.1.3]{Liu-Roeckner}. In this paper, we provide a slight extension of \cite[Thm 1.2]{BLZ}, which is an existence and uniqueness result for SPDEs driven by Wiener processes and Poisson random measures in the framework of the variational approach; see Theorem \ref{thm-SDE-var}. We will use this result later on.

If the semigroup is a semigroup of contractions, then the group on the larger Hilbert space is unitary. By virtue of this property, in \cite{Tappe-mon} we were able to transfer all the required conditions to the SDE on the larger Hilbert space; in particular those, where the inner products of the Hilbert spaces are involved.

In the present situation, where the semigroup is pseudo-contractive, the group on the larger Hilbert space does not need to be unitary. As a consequence, we cannot immediately follow the arguments from \cite{Tappe-mon}. However, it turns out that the group on the larger Hilbert has certain properties, which are similar to those of a unitary group; see Proposition \ref{prop-diagram} below. With slightly more extensive calculations, these properties of the group also allow us to transfer all the required conditions to the SDE on the larger Hilbert space, which is the key for the proof of Theorem \ref{thm-SPDE}.

The remainder of this note is organized as follows. In Section \ref{sec-variational} we provide the mentioned existence and uniqueness result for SPDEs in the framework of the variational approach. Using this result, in Section \ref{sec-SDE} we show an existence and uniqueness result for infinite dimensional SDEs. Afterwards, in Section \ref{sec-SPDE} we provide the improved existence and uniqueness result for SPDEs in the framework of the semigroup approach, and present further consequences.

\section{Stochastic partial differential equations in the framework of the variational approach}\label{sec-variational}

In this section we provide the announced existence and uniqueness result for SPDEs in the framework of the variational approach. Let $T > 0$ be a finite time horizon, and let $(\Omega,\calf,(\calf_t)_{t \in [0,T]},\bbp)$ be a filtered probability space satisfying the usual conditions. Let $(V, H, V^*)$ be a Gelfand triplet. This means that $H$ is a separable Hilbert space, and that $V$ a reflexive Banach space such that $V \subset H$ continuously and densely. Then, by identification we have $V \subset H \subset V^*$, where $V^*$ denotes the dual space of $V$. For what follows, we will use the notation ${}_{V^*}\la v^*,v \ra_V := v^*(v)$ for $v^* \in V^*$ and $v \in V$. We refer, for example to \cite[Sec. 4.1]{Prevot-Roeckner} or \cite[Sec. 4.1]{Liu-Roeckner} for further details about Gelfand triplets. Let $U$ be a separable Hilbert space, and let $W = (W_t)_{t \in [0,T]}$ be a cylindrical Wiener process in $U$. We denote by $L_2(U,H)$ the space of all Hilbert-Schmidt operators from $U$ to $H$. Let $(Z,\calz)$ be a measurable space, and let $N$ be a Poisson random measure on $Z$ with compensator $\nu(dz) \otimes dt$ for some $\sigma$-finite measure $\nu$ on $(Z,\calz)$. The compensated Poisson random measure is denoted by $\tilde{N}(dt,dz) := N(dt,dz) - \nu(dz)dt$. Let $D \in \calz$ be such that $\nu(D) < \infty$. We denote by $\calb \calf$ be the $\sigma$-field of all progressively measurable sets on $[0,T] \times \Omega$, that is
\begin{align*}
\calb \calf = \{ A \subset [0,T] \times \Omega : A \cap ([0,t] \times \Omega) \in \calb([0,t]) \otimes \calf_t \text{ for all } t \in [0,T] \}.
\end{align*}
We consider the SPDE
\begin{align}\label{SPDE-var}
\left\{
\begin{array}{rcl}
dX_t & = & A(t,X_t) dt + B(t,X_t)dW_t + \int_{D^c} f(t,X_{t-},z) \tilde{N}(dt,dz)
\\ &  & + \int_D g(t,X_{t-},z) N(dt,dz) \medskip
\\ X_0 & = & x_0,
\end{array}
\right.
\end{align}
where $A : [0,T] \times \Omega \times V \to V^*$ and $B : [0,T] \times \Omega \times V \to L_2(U,H)$ are $\calb \calf \otimes \calb(V)$-measurable functions, and $f,g : [0,T] \times \Omega \times V \times Z \to H$ are $\calb \calf \otimes \calb(V) \otimes \calz$-measurable functions.

\begin{assumption}\label{ass-SPDE-var}
We assume there are constants $\alpha \in (1,\infty)$, $\beta \in \bbr_+$, $\theta \in (0,\infty)$, $\bar{C} \in \bbr_+$ and a nonnegative adapted process $F \in L^1([0,T] \times \Omega; dt \otimes \bbp)$ such that for all $u,v,\vartheta \in V$ and all $(t,\omega) \in [0,T] \times \Omega$ the following conditions are fulfilled:
\begin{enumerate}
\item[(H1)] (Hemicontinuity) The map $\lambda \mapsto {}_{V^*}\la A(t,\omega,u+\lambda v),\vartheta \ra_V$ is continuous on $\bbr$.

\item[(H2')] (Local monotonicity) We have
\begin{align*}
&2 {}_{V^*}\la A(t,\omega,u) - A(t,\omega,v), u-v \ra_V + \| B(t,\omega,u) - B(t,\omega,v) \|_{L_2(U,H)}^2
\\ &+ \int_Z \| f(t,\omega,u,z) - f(t,\omega,v,z) \|_H^2 \nu(dz) \leq (F(t,\omega) + \rho(v)) \| u-v \|_H^2,
\end{align*}
where $\rho : V \to [0,\infty)$ is a measurable, bounded on balls function.

\item[(H3)] (Coercivity) We have
\begin{align*}
2 {}_{V^*}\la A(t,\omega,v),v \ra_V + \| B(t,\omega,v) \|_{L_2(U,H)}^2 \leq \bar{C} \| v \|_H^2 - \theta \| v \|_V^{\alpha} + F(t,\omega).
\end{align*}
\item[(H4')] (Growth) We have
\begin{align*}
\| A(t,\omega,v) \|_{V^*}^{\frac{\alpha}{\alpha - 1}} \leq ( F(t,\omega) + \bar{C} \| v \|_V^{\alpha} ) (1 + \| v \|_H^{\beta}).
\end{align*}
\end{enumerate}
\end{assumption}

\begin{definition}
Given an $\calf_0$-measurable random variable $x_0 : \Omega \to H$, an $H$-valued c\`{a}dl\`{a}g adapted process $X = \{ X_t \}_{t \in [0,T]}$ is called a \emph{strong solution} to the SPDE (\ref{SPDE-var}) with $X_0 = x_0$ if for its $dt \otimes \bbp$-equivalence class $\hat{X}$ the following conditions are fulfilled:
\begin{enumerate}
\item We have $\hat{X} \in L^{\alpha}([0,T];V) \cap L^2([0,T];H)$.

\item We have $\bbp$-almost surely
\begin{equation}\label{eqn-SDE}
\begin{aligned}
X_t &= x_0 + \int_0^t A(s,\bar{X}_s) ds + \int_0^t B(s,\bar{X}_s) dW_s
\\ &\quad + \int_0^t \int_{D^c} f(s,\bar{X}_s,z) \tilde{N}(ds,dz) + \int_0^t \int_D g(s,\bar{X}_s,z) N(ds,dz), \quad t \in [0,T],
\end{aligned}
\end{equation}
where $\bar{X}$ is any $V$-valued progressively measurable $dt \otimes \bbp$-version of $\hat{X}$.
\end{enumerate}
\end{definition}

\begin{remark}
The stochastic integrals appearing in equation (\ref{eqn-SDE}) are specified as follows:
\begin{enumerate}
\item For a suitable $L_2(U,H)$-valued process $B$ we denote by
\begin{align*}
\int_0^t B_s dW_s, \quad t \in [0,T]
\end{align*}
the stochastic integral with respect to the cylindrical Wiener process $W$, as defined, for example, in \cite[Sec. 2.5]{Liu-Roeckner}.

\item For a suitable $H$-valued process $f$ we denote by
\begin{align*}
\int_0^t \int_{D^c} f(s,z) \tilde{N}(ds,dz), \quad t \in [0,T]
\end{align*}
the stochastic integral with respect to the compensated Poisson random measure $\tilde{N}$, as defined, for example, in \cite[Sec. 2.3]{SPDE}.

\item For a suitable $H$-valued process $g$ we denote by
\begin{align*}
\int_0^t \int_D g(s,z) N(ds,dz), \quad t \in [0,T]
\end{align*}
the stochastic integral with respect to the Poisson random measure $N$. Since $\nu(D) < \infty$, this integral can be defined pathwise for every $\omega \in \Omega$.
\end{enumerate}
\end{remark}

\begin{theorem}\label{thm-SDE-var}
Suppose that Assumption \ref{ass-SPDE-var} is satisfied for some $F \in L^{p/2}([0,T] \times \Omega; dt \otimes \bbp)$ with $p := \beta + 2$, and that there are constants $C \in \bbr_+$ and $\kappa \in [0,\frac{\theta}{2 \beta})$, where we agree on the condition $\kappa \in [0,\infty)$ in case $\beta = 0$, such that for all $(t,\omega,v) \in [0,T] \times \Omega \times V$ we have
\begin{align}\label{b-zusatzbedingung}
\| B(t,\omega,v) \|_{L_2(U,H)}^2 + \int_Z \| f(t,\omega,v,z) \|_H^2 \nu(dz) &\leq F(t,\omega) + C \| v \|_H^2 + \kappa \| v \|_V^{\alpha},
\\ \int_Z \| f(t,\omega,v,z) \|_H^p \nu(dz) &\leq F(t,\omega)^{p/2} + C \| v \|_H^p,
\\ \label{rho-zusatzbedingung} \rho(v) &\leq C(1 + \| v \|_V^{\alpha}) (1 + \| v \|_H^{\beta}).
\end{align}
Then for each $x_0 \in L^p(\Omega,\calf_0,\bbp;H)$ there exists a unique strong solution $X = \{ X_t \}_{t \in [0,T]}$ to the SPDE (\ref{SPDE-var}) with $X_0 = x_0$.
\end{theorem}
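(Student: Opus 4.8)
The plan is to deduce Theorem \ref{thm-SDE-var} from \cite[Thm.~1.2]{BLZ}, whose hypotheses are phrased for SPDEs driven by Wiener processes and Poisson random measures in the variational framework but in a slightly more restrictive form (typically: full monotonicity, or a growth exponent tied more tightly to the coercivity data, or the jump coefficient $g$ on the finite-activity part $D$ being absent or handled separately). So the first step is to isolate exactly which of the assumptions (H1), (H2$'$), (H3), (H4$'$) together with \eqref{b-zusatzbedingung}--\eqref{rho-zusatzbedingung} go beyond \cite[Thm.~1.2]{BLZ}, and then bridge the gap. I expect three reductions to be needed: (i) absorbing the finite-activity jump term $\int_D g(t,X_{t-},z)\,N(dt,dz)$; (ii) passing from the local monotonicity condition (H2$'$) with the ball-bounded perturbation $\rho(v)$ to whatever monotonicity is assumed in \cite{BLZ}; and (iii) matching the moment exponent $p = \beta+2$ and the integrability $F \in L^{p/2}$.

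For (i), since $\nu(D) < \infty$, the integral $\int_0^t\int_D g(s,\bar X_s,z)\,N(ds,dz)$ is a pathwise finite sum over the (finitely many, on $[0,T]$) jump times of $N$ restricted to $D$. The standard device is an interlacing/solution-by-pieces argument: let $0 = \tau_0 < \tau_1 < \tau_2 < \dots$ be the successive jump times of $N(\cdot, D)$; on each stochastic interval $[\tau_{k}, \tau_{k+1})$ the equation has no $g$-term, so the existence-uniqueness result from \cite{BLZ} (applied with the shifted initial condition and the shifted filtration) produces a unique strong solution, and at $\tau_{k+1}$ one adds the jump $\int_Z g(\tau_{k+1}, X_{\tau_{k+1}-}, z)\, N(\{\tau_{k+1}\}, dz)$ to restart. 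One checks that the concatenation is càdlàg, adapted, lies in $L^\alpha([0,T];V) \cap L^2([0,T];H)$, and solves \eqref{eqn-SDE}; uniqueness follows piece by piece. A measurable-selection/localization argument handles the dependence on $k$ uniformly. Care is needed to verify that the $L^p$-moment of $x_0$ is propagated through each piece, which is where the second line of \eqref{b-zusatzbedingung} (the $L^p$-growth bound on $f$) and $F \in L^{p/2}$ enter: they guarantee the a priori $L^p$-bound $\bbe\big[\sup_{t\in[0,T]}\|X_t\|_H^p\big] < \infty$ that makes the interlacing terminate with the right integrability.

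For (ii) and (iii), the point is that \cite[Thm.~1.2]{BLZ} is stated under conditions that already include a local-monotonicity formulation essentially of the form (H2$'$), so the task is mainly bookkeeping: verify that the constant $\kappa \in [0,\tfrac{\theta}{2\beta})$ condition, together with \eqref{b-zusatzbedingung} and \eqref{rho-zusatzbedingung}, reproduces exactly the coercivity/growth trade-off demanded in \cite{BLZ} after one absorbs the $\kappa\|v\|_V^\alpha$ term from the diffusion/jump bound into the $-\theta\|v\|_V^\alpha$ term of (H3) — this is why $\kappa < \theta/(2\beta)$ rather than merely $\kappa < \theta/2$: the extra factor $\beta$ (resp.\ the convention $\kappa$ arbitrary when $\beta=0$) comes from the Young-inequality step used to control the $\|v\|_H^\beta$ factor appearing in the growth bound (H4$'$) and in \eqref{rho-zusatzbedingung} against $\|v\|_V^\alpha$ when one derives the a priori estimates at the $L^p$-level. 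Once the a priori $L^p$-estimate is in place, (H1) gives hemicontinuity, (H2$'$) gives uniqueness and the monotonicity trick (Minty-type, or the local version via the stopping-time argument of \cite{BLZ}) for passing to the limit, and (H4$'$) controls $A(\cdot, X_\cdot)$ in $L^{\alpha/(\alpha-1)}([0,T];V^*)$.

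The main obstacle I expect is the interlacing step (i): one must show that splitting at the $D$-jumps does not destroy the global integrability, i.e.\ that the number of restarts on $[0,T]$ (a Poisson-distributed random variable with mean $T\nu(D)$) combined with the per-piece $L^p$-bounds still yields $\bbe\big[\sup_t \|X_t\|_H^p\big] < \infty$ and $\hat X \in L^\alpha([0,T];V)$; this requires a careful conditioning argument exploiting independence of the Poisson increments and a uniform-in-$k$ version of the a priori estimate, rather than a naive sum of $p$-th moments. The rest is a matter of checking that the hypotheses here are a genuine special case of (or a routine strengthening of the conclusion of) \cite[Thm.~1.2]{BLZ}.
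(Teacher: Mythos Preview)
Your proposal is not wrong, but it is substantially more elaborate than what is actually needed, and it misidentifies where the gap between the present theorem and \cite[Thm.~1.2]{BLZ} lies.

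The paper's proof is essentially a one-line citation of \cite[Thm.~1.2]{BLZ}. That result already treats SPDEs in the variational framework driven by a Wiener process \emph{and} a Poisson random measure, with both a compensated part on $D^c$ and an uncompensated large-jump part on $D$ (with $\nu(D)<\infty$), under local monotonicity with a perturbation $\rho(v)$ exactly as in (H2$'$), and with the growth/coercivity and moment structure matching (H3), (H4$'$), \eqref{b-zusatzbedingung}--\eqref{rho-zusatzbedingung}. In particular, your reduction (i) --- the interlacing argument to absorb the $g$-term --- is unnecessary: \cite{BLZ} already contains the large-jump term and carries out precisely this interlacing internally. Likewise your reductions (ii) and (iii) are already built into the hypotheses of \cite[Thm.~1.2]{BLZ}; no additional Young-inequality bookkeeping or moment matching is required on your side.

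The one genuine discrepancy, which you do not mention, is that in \cite{BLZ} the local monotonicity condition (H2$'$) is stated with a deterministic constant $K$ in place of the adapted process $F(t,\omega)$. The paper disposes of this by observing that (H2$'$) enters the argument of \cite{BLZ} only in two places --- once in the existence proof and once in the uniqueness proof --- and that in both places the computations go through verbatim with $K$ replaced by $F$, since $F \in L^{p/2}([0,T]\times\Omega)$ is exactly the integrability needed. So the entire content of the proof is this remark, not the three-part reduction you outline.
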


\begin{proof}
The proof is a consequence of \cite[Thm. 1.2]{BLZ}. Note that in \cite{BLZ} the local monotonicity condition (H2') is only stated with the process $F$ replaced by some constant $K$. However, the local monotonicity condition is only used once for the existence proof (see pages 304--306 in \cite{BLZ}) and once for the uniqueness proof (see pages 306--307 in \cite{BLZ}). The same arguments hold true with the constant $K$ replaced by the process $F$.
\end{proof}

\section{Infinite dimensional stochastic differential equations}\label{sec-SDE}

In this section we provide the existence and uniqueness result for infinite dimensional SDEs. The stochastic framework is as in Section \ref{sec-variational}. Let $\calh$ be a separable Hilbert space. We consider the $\calh$-valued SDE
\begin{align}\label{SDE}
\left\{
\begin{array}{rcl}
dY_t & = & a(t,Y_t) d t + b(t,Y_t) d W_t + \int_{D^c} c(t,Y_{t-},z) \tilde{N}(dt,dz) 
\\ && \int_{D} d(t,Y_{t-},z) N(dt,dz) \medskip
\\ Y_0 & = & y_0,
\end{array}
\right.
\end{align}
where $a : [0,T] \times \Omega \times \calh \to \calh$ and $b : [0,T] \times \Omega \times \calh \to L_2(U,\calh)$ are $\calb \calf \otimes \calb(\calh)$-measurable functions, and $c,d : [0,T] \times \Omega \times \calh \times Z \to \calh$ are $\calb \calf \otimes \calb(\calh) \otimes \calz$-measurable functions.

Given an $\calf_0$-measurable random variable $y_0 : \Omega \to \calh$, an $\calh$-valued c\`{a}dl\`{a}g adapted process $Y = \{ Y_t \}_{t \in [0,T]}$ is called a \emph{strong solution} to the SDE (\ref{SDE}) with $Y_0 = y_0$ if we have $\bbp$-almost surely
\begin{align*}
\int_0^T \bigg( \| a(s,Y_s) \|_{\calh} + \| b(s,Y_s) \|_{L_2(U,\calh)}^2 + \int_{D^c} \| c(s,Y_{s-},z) \|_{\calh}^2 \nu(d z) \bigg) d s < \infty
\end{align*}
as well as
\begin{align*}
Y_t &= y_0 + \int_0^t a(s,Y_s) d s + \int_0^t b(s,Y_s) d W_s + \int_0^t \int_{D^c} c(s,Y_{s-},z) \tilde{N}(d s,d z)
\\ &\quad + \int_0^t \int_{D} d(s,Y_{s-},z) N(d s,d z), \quad t \in [0,T].
\end{align*}

\begin{assumption}\label{ass-SDE}
We assume there are constants $\beta,\bar{C},\theta \in \bbr_+$ such that $\bar{C} \geq \theta > 0$ and a nonnegative adapted process $F \in L^1([0,T] \times \Omega; dt \otimes \bbp)$ such that for all $u,v,\vartheta \in \calh$ and all $(t,\omega) \in [0,T] \times \Omega$ the following conditions are fulfilled:
\begin{enumerate}
\item[(SD1)] (Hemicontinuity) The map $\lambda \mapsto \la a(t,\omega,u+\lambda v),\vartheta \ra_{\calh}$ is continuous on $\bbr$.

\item[(SD2')] (Local monotonicity) We have
\begin{align*}
&2\la a(t,\omega,u) - a(t,\omega,v), u-v \ra_{\calh} + \| b(t,\omega,u) - b(t,\omega,v) \|_{L_2(U,\calh)}^2
\\ &+ \int_Z \| c(t,\omega,u,z) - c(t,\omega,v,z) \|_{\calh}^2 \nu(dz) \leq (F(t,\omega) + \tau(\| v \|_{\calh})) \| u-v \|_{\calh}^2,
\end{align*}
where $\tau : \bbr_+ \to \bbr_+$ is a continuous, increasing function.

\item[(SD3)] (Coercivity) We have
\begin{align*}
2 \la a(t,\omega,v),v \ra_{\calh} + \| b(t,\omega,v) \|_{L_2(U,\calh)}^2 \leq (\bar{C}-\theta) \| v \|_{\calh}^2 + F(t,\omega).
\end{align*}
\item[(SD4')] (Growth) We have
\begin{align*}
\| a(t,\omega,v) \|_{\calh}^2 \leq ( F(t,\omega) + \bar{C} \| v \|_{\calh}^2 ) (1 + \| v \|_{\calh}^{\beta}).
\end{align*}
\end{enumerate}
\end{assumption}

\begin{theorem}\label{thm-SDE}
Suppose that Assumption \ref{ass-SDE} is satisfied for some $F \in L^{p/2}([0,T] \times \Omega; dt \otimes \bbp)$ with $p := \beta + 2$, and that there are constants $C \in \bbr_+$ and $\kappa \in [0,\frac{\theta}{2 \beta})$, where we agree on the condition $\kappa \in [0,\infty)$ in case $\beta = 0$, such that for all $(t,\omega,v) \in [0,T] \times \Omega \times \calh$ we have
\begin{align}\label{b-zusatz}
\| b(t,\omega,v) \|_{L_2(U,\calh)}^2 + \int_Z \| c(t,\omega,v,z) \|_{\calh}^2 \nu(dz) &\leq F(t,\omega) + (C + \kappa) \| v \|_{\calh}^2,
\\ \label{b-zusatz-2} \int_Z \| c(t,\omega,v,z) \|_{\calh}^p \nu(dz) &\leq F(t,\omega)^{p/2} + C \| v \|_{\calh}^p,
\\ \label{tau-zusatz} \tau(r) &\leq C(1 + r^2) (1 + r^{\beta}), \quad r \in \bbr_+.
\end{align}
Then for each $\calf_0$-measurable random variable $y_0 : \Omega \to \calh$ there exists a unique strong solution $Y$ to the SDE (\ref{SDE}) with $Y_0 = y_0$.
\end{theorem}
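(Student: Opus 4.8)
The plan is to obtain Theorem~\ref{thm-SDE} from Theorem~\ref{thm-SDE-var} by specializing the latter to the trivial Gelfand triplet $V = H = V^* = \calh$, for which the duality pairing ${}_{V^*}\la\cdot,\cdot\ra_V$ becomes the inner product $\la\cdot,\cdot\ra_{\calh}$. Concretely I would set $A := a$, $B := b$, $f := c$, $g := d$, $\rho(v) := \tau(\| v \|_{\calh})$ and $\alpha := 2$; the choice $\alpha = 2$ is forced, because $\frac{\alpha}{\alpha - 1} = 2$ precisely when $\alpha = 2$, which is what matches the quadratic growth exponent of (SD4'). Since $\tau$ is continuous and increasing, $\rho$ is continuous, hence measurable, and bounded on balls. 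With these identifications one checks term by term that Assumption~\ref{ass-SDE} together with \eqref{b-zusatz}--\eqref{tau-zusatz} implies Assumption~\ref{ass-SPDE-var} together with \eqref{b-zusatzbedingung}--\eqref{rho-zusatzbedingung}, keeping the constants $\bar C$, $\theta$, $\beta$, $C$ and $\kappa$ unchanged: (H1) is (SD1); (H2') is (SD2') with $\rho(v) = \tau(\| v \|_{\calh})$; (H3) follows from (SD3) via $\bar C\| v \|_{\calh}^2 - \theta\| v \|_{\calh}^2 = (\bar C - \theta)\| v \|_{\calh}^2$; (H4') is (SD4'); and \eqref{b-zusatzbedingung}, the $p$-th moment bound on $f$, and \eqref{rho-zusatzbedingung} become \eqref{b-zusatz}, \eqref{b-zusatz-2} and \eqref{tau-zusatz} after substituting $\| v \|_V^{\alpha} = \| v \|_{\calh}^2$ and, in the last one, $r = \| v \|_{\calh}$. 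The hypotheses on $F$ are identical. Moreover, when $V = H = \calh$ and $\alpha = 2$ the notion of a strong solution to \eqref{SPDE-var} and that of a strong solution to \eqref{SDE} coincide: the regularity requirement $\hat X \in L^{\alpha}([0,T];V) \cap L^2([0,T];H) = L^2([0,T];\calh)$ is automatic for an $\calh$-valued c\`{a}dl\`{a}g process (bounded paths on $[0,T]$), a $V$-valued version can be taken to be the process itself, and \eqref{eqn-SDE} is literally \eqref{SDE}; conversely, the pathwise integrability built into the SDE solution concept follows from (SD4'), \eqref{b-zusatz} and $F \in L^1([0,T]\times\Omega;dt\otimes\bbp)$ together with boundedness of c\`{a}dl\`{a}g paths. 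Hence Theorem~\ref{thm-SDE-var} directly yields existence and uniqueness of a strong solution to \eqref{SDE} for every $y_0 \in L^p(\Omega,\calf_0,\bbp;\calh)$.

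It then remains to relax the integrability of the initial condition to mere $\calf_0$-measurability. I would do this by localizing over the $\calf_0$-sets $\Omega_n := \{ \| y_0 \|_{\calh} \leq n \}$, which increase to $\Omega$ up to a $\bbp$-null set. The tool is the following \emph{locality property}: if $y_0, y_0' \in L^p(\Omega,\calf_0,\bbp;\calh)$ with corresponding strong solutions $Y, Y'$, then $Y = Y'$ on the $\calf_0$-set $\{ y_0 = y_0' \}$ up to indistinguishability; this is proved by restricting the probability space to that set (equipped with the conditional probability), which again satisfies the usual conditions, carries $W$ as a cylindrical Wiener process and $N$ as a Poisson random measure with the same compensator, and on which $Y$ and $Y'$ solve \eqref{SDE} with the same $L^p$ initial condition, so that they agree by the uniqueness just established. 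Now put $y_0^{(n)} := y_0 \bbI_{\Omega_n} \in L^p(\Omega,\calf_0,\bbp;\calh)$ and let $Y^{(n)}$ be the associated strong solution. Since $y_0^{(m)} = y_0^{(n)}$ on $\Omega_n$ for $m \geq n$, the locality property gives $Y^{(m)} = Y^{(n)}$ on $\Omega_n$ for $m \geq n$, so the $Y^{(n)}$ stabilize on a set of full measure and $Y := Y^{(n)}$ on $\Omega_n$ defines an $\calh$-valued c\`{a}dl\`{a}g adapted process which, on each $\Omega_n$ and hence on $\Omega$, satisfies the integrability condition and \eqref{SDE} with $Y_0 = y_0$. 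For uniqueness, if $\tilde Y$ is any strong solution of \eqref{SDE} with $\tilde Y_0 = y_0$, then on $\Omega_n$, after restricting the probability space as above, $\tilde Y$ and $Y^{(n)}$ are strong solutions with the common bounded (hence $L^p$) initial condition $y_0|_{\Omega_n}$, so they are indistinguishable there; letting $n \to \infty$ gives $\tilde Y = Y$.

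The first step --- the term-by-term translation of the hypotheses and the identification of the two solution concepts --- is routine once $\alpha = 2$ is recognized as the only viable choice. I expect the main obstacle to lie in the second step: carefully justifying that conditioning on (equivalently, restricting to) an $\calf_0$-measurable event preserves the entire stochastic framework, namely the usual conditions, the cylindrical Wiener process, and the Poisson random measure together with its compensator, and handling the $dt\otimes\bbp$-version and equivalence-class technicalities in the two definitions of strong solution so that the pasted process $Y$ is genuinely c\`{a}dl\`{a}g, adapted, and solves \eqref{SDE}.
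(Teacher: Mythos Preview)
Your proposal is correct and follows essentially the same approach as the paper: reduce to Theorem~\ref{thm-SDE-var} with the trivial Gelfand triplet $V=H=\calh$, $\alpha=2$, and $\rho(v)=\tau(\|v\|_{\calh})$, and then remove the $L^p$-integrability of $y_0$ by an $\calf_0$-localization. The only cosmetic difference is that the paper localizes via the $\calf_0$-\emph{partition} $\Omega_n=\{\|y_0\|_{\calh}\in[n-1,n)\}$ rather than your increasing sequence $\{\|y_0\|_{\calh}\le n\}$, and leaves the locality/pasting argument implicit, whereas you spell it out.
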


\begin{proof}
If $y_0 \in L^p(\Omega,\calf_0,\bbp;\calh)$, then the proof is a consequence of Theorem \ref{thm-SDE-var} with $V = H = \calh$, $\alpha = 2$ and $\rho : \calh \to \bbr_+$ given by $\rho(v) = \tau(\| v \|_{\calh})$ for $v \in \calh$. For an arbitrary $\calf_0$-measurable random variable $y_0 : \Omega \to \calh$, the assertion follows by considering the $\calf_0$-measurable partition $(\Omega_n)_{n \in \bbn}$ of $\Omega$ given by
\begin{align*}
\Omega_n := \{ \| y_0 \|_{\calh} \in [n-1,n) \}
\end{align*}
and the sequence $(y_0^n)_{n \in \bbn}$ of $\calf_0$-measurable random variables given by $y_0^n := y_0 \bbI_{\Omega_n}$ for each $n \in \bbn$.
\end{proof}

\section{Stochastic partial differential equations in the framework of the semigroup approach}\label{sec-SPDE}

In this section we provide the improved existence and uniqueness result for SPDEs in the framework of the semigroup approach. The stochastic framework is as in Section \ref{sec-variational}. Let $H$ be a separable Hilbert space, and let $A$ be the generator of a $C_0$-semigroup $(S_t)_{t \geq 0}$ on $H$. We consider the $H$-valued SPDE
\begin{align}\label{SPDE}
\left\{
\begin{array}{rcl}
dX_t & = & ( A + \alpha(t,X_t) ) d t + \sigma(t,X_t) d W_t + \int_{D^c} \gamma(t,X_{t-},z) \tilde{N}(dt,dz)
\\ && + \int_{D} \delta(t,X_{t-},z) N(dt,dz) \medskip
\\ X_0 & = & x_0,
\end{array}
\right.
\end{align}
where $\alpha : [0,T] \times \Omega \times H \to H$ and $\sigma : [0,T] \times \Omega \times H \to L_2(U,H)$ are $\calb \calf \otimes \calb(H)$-measurable functions, and $\gamma,\delta : [0,T] \times \Omega \times H \times Z \to H$ are $\calb \calf \otimes \calb(H) \otimes \calz$-measurable functions.

Given an $\calf_0$-measurable random variable $x_0 : \Omega \to \calh$, an $H$-valued c\`{a}dl\`{a}g adapted process $X = \{ X_t \}_{t \in [0,T]}$ is called a \emph{mild solution} to the SPDE (\ref{SPDE}) with $X_0 = x_0$ if we have $\bbp$-almost surely
\begin{align*}
\int_0^T \bigg( \| \alpha(s,X_s) \|_H + \| \sigma(s,X_s) \|_{L_2(U,H)}^2 + \int_E \| \gamma(s,X_{s-},z) \|_H^2 \nu(d z) \bigg) d s < \infty
\end{align*}
as well as
\begin{align*}
X_t &= S_t x_0 + \int_0^t S_{t-s} \alpha(s,X_s) d s + \int_0^t S_{t-s} \sigma(s,X_s) d W_s
\\ &\quad + \int_0^t \int_{D^c} S_{t-s} \gamma(s,X_{s-},z) \tilde{N}(d s,d z)
\\ &\quad + \int_0^t \int_D S_{t-s} \delta(s,X_{s-},z) N(d s,d z), \quad t \in [0,T].
\end{align*}

\begin{assumption}\label{ass-SPDE}
We assume there are constants $\beta, \bar{C}, \theta \in \bbr_+$ such that $\bar{C} \geq \theta > 0$ and a nonnegative adapted process $F \in L^1(\Omega \times [0,T]; dt \otimes \bbp)$ such that for all $x,y,\zeta \in H$ and all $(t,\omega) \in [0,T] \times \Omega$ the following conditions are fulfilled:
\begin{enumerate}
\item[(SP1)] (Hemicontinuity) The map $\lambda \mapsto \la \alpha(t,\omega,x+\lambda y),\zeta \ra_H$ is continuous on $\bbr$.

\item[(SP2')] (Local monotonicity) We have
\begin{align*}
&2\la \alpha(t,\omega,x) - \alpha(t,\omega,y), x-y \ra_H + \| \sigma(t,\omega,x) - \sigma(t,\omega,y) \|_{L_2(U,H)}^2
\\ &+ \int_Z \| \gamma(t,\omega,x,z) - \gamma(t,\omega,y,z) \|_H^2 \nu(dz) \leq (F(t,\omega) + \tau(\| y \|_H)) \| x-y \|_H^2,
\end{align*}
where $\tau : \bbr_+ \to \bbr_+$ is a continuous, increasing function.

\item[(SP3)] (Coercivity) We have
\begin{align*}
2 \la \alpha(t,\omega,y),y \ra_H + \| \sigma(t,\omega,y) \|_{L_2(U,H)}^2 \leq (\bar{C}-\theta) \| y \|_H^2 + F(t,\omega).
\end{align*}
\item[(SP4')] (Growth) We have
\begin{align*}
\| \alpha(t,\omega,y) \|_H^2 \leq ( F(t,\omega) + \bar{C} \| y \|_H^2 ) (1 + \| y \|_H^{\beta}).
\end{align*}
\end{enumerate}
\end{assumption}

\begin{theorem}\label{thm-SPDE}
We assume that the semigroup $(S_t)_{t \geq 0}$ is pseudo-contractive; that is, there is a constant $\eta \geq 0$ such that
\begin{align}\label{pseudo-contr}
\| S_t \| \leq e^{\eta t} \quad \text{for all $t \geq 0$.}
\end{align}
Furthermore, we suppose that Assumption \ref{ass-SPDE} is satisfied for some $F \in L^{p/2}([0,T] \times \Omega; dt \otimes \bbp)$ with $p := \beta + 2$, and that there are constants $C \in \bbr_+$ and $\kappa \in [0,\frac{\theta}{2 \beta})$, where we agree on the condition $\kappa \in [0,\infty)$ in case $\beta = 0$, such that for all $(t,\omega,y) \in [0,T] \times \Omega \times H$ we have
\begin{align}\label{sigma-zusatzbedingung}
\| \sigma(t,\omega,y) \|_{L_2(U,H)}^2 + \int_Z \| \gamma(t,\omega,y,z) \|_H^2 \nu(dz) &\leq F(t,\omega) + (C + \kappa) \| y \|_H^2,
\\ \label{gamma-zusatzbedingung} \int_Z \| \gamma(t,\omega,y,z) \|_H^p \nu(dz) &\leq F(t,\omega)^{p/2} + C \| y \|_H^p,
\\ \label{tau-zusatzbedingung} \tau(r) &\leq C(1 + r^2) (1 + r^{\beta}), \quad r \in \bbr_+.
\end{align}
Then for each $\calf_0$-measurable random variable $x_0 : \Omega \to H$ there exists a unique mild solution $X$ to the SPDE (\ref{SPDE}) with $X_0 = x_0$.
\end{theorem}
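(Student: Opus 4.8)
The plan is to follow the ``method of the moving frame'' of \cite{SPDE}, as in \cite{Tappe-mon}, but now accommodating a non-unitary group. By Proposition \ref{prop-diagram} (obtained by applying a dilation theorem to the contraction semigroup $(e^{-\eta t} S_t)_{t \geq 0}$ and rescaling), there are a separable Hilbert space $\calh \supseteq H$, a $C_0$-group $(U_t)_{t \in \bbr}$ on $\calh$ with $\| U_t \| \leq e^{\eta |t|}$, an isometric embedding $\ell : H \to \calh$ and its adjoint $\pi := \ell^* : \calh \to H$ (so $\| \pi \| \leq 1$), such that $\pi U_t \ell = S_t$ for all $t \geq 0$ and, since the dilated contraction semigroup is unitary, $U_{-t} = e^{-2 \eta t} U_t^*$ and $\| U_{-t} \ell x \|_{\calh} = e^{-\eta t} \| x \|_H$ for all $t \geq 0$ and $x \in H$. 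On $\calh$ I would consider the SDE \eqref{SDE} with
\begin{align*}
a(t,\omega,y) &:= U_{-t} \ell \, \alpha(t,\omega,\pi U_t y), & b(t,\omega,y) &:= U_{-t} \ell \, \sigma(t,\omega,\pi U_t y),
\\ c(t,\omega,y,z) &:= U_{-t} \ell \, \gamma(t,\omega,\pi U_t y,z), & d(t,\omega,y,z) &:= U_{-t} \ell \, \delta(t,\omega,\pi U_t y,z),
\end{align*}
which are jointly measurable because $(t,y) \mapsto \pi U_t y$ is continuous, and with initial value $y_0 := \ell x_0$, which is $\calf_0$-measurable and $\calh$-valued. (No condition is imposed on $\delta$, hence none is needed for $d$.)

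The central step is to show that Assumption \ref{ass-SPDE} together with \eqref{sigma-zusatzbedingung}--\eqref{tau-zusatzbedingung} implies Assumption \ref{ass-SDE} together with \eqref{b-zusatz}--\eqref{tau-zusatz} for these coefficients. Fixing $u,v \in \calh$ and $(t,\omega)$, and putting $x := \pi U_t u$, $y := \pi U_t v$, the identity $U_{-t} = e^{-2\eta t} U_t^*$ combined with $\pi = \ell^*$ gives $\la a(t,\omega,u) - a(t,\omega,v), u - v \ra_{\calh} = e^{-2\eta t} \la \alpha(t,\omega,x) - \alpha(t,\omega,y), x - y \ra_H$, while $\| U_{-t} \ell \cdot \|_{\calh} = e^{-\eta t} \| \cdot \|_H$ gives $\| b(t,\omega,u) - b(t,\omega,v) \|_{L_2(U,\calh)}^2 = e^{-2\eta t} \| \sigma(t,\omega,x) - \sigma(t,\omega,y) \|_{L_2(U,H)}^2$ and likewise for the $c$-term; the same identities hold for the coercivity and growth expressions, and for $\int_Z \| c \|_{\calh}^p \nu(dz) = e^{-p\eta t} \int_Z \| \gamma \|_H^p \nu(dz)$. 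Combining these with $\| x - y \|_H \leq e^{\eta t} \| u-v \|_{\calh}$, $\| y \|_H \leq e^{\eta T} \| v \|_{\calh}$ and $e^{-\eta t}, e^{-2\eta t} \leq 1$ on $[0,T]$, the conditions (SP1)--(SP4') and \eqref{sigma-zusatzbedingung}--\eqref{tau-zusatzbedingung} turn into (SD1)--(SD4') and \eqref{b-zusatz}--\eqref{tau-zusatz}, at the cost of replacing $F$ by a fixed constant multiple of itself (still in $L^{p/2}$), enlarging $\bar C$ and $C$ by factors depending only on $\eta, T, \beta$, and replacing $\tau$ by $r \mapsto \tau(e^{\eta T} r)$, which remains continuous and increasing; here one uses $\bar C \geq \theta > 0$. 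Crucially $\beta$, $\theta$ and $\kappa$ are unchanged, so $p = \beta + 2$ and the range $\kappa \in [0,\frac{\theta}{2\beta})$ are preserved.

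Granting this, Theorem \ref{thm-SDE} yields a unique strong solution $Y$ to \eqref{SDE} with $Y_0 = \ell x_0$. I would then set $X_t := \pi U_t Y_t$ and verify it is a mild solution of \eqref{SPDE} with $X_0 = x_0$: inserting $X_s = \pi U_s Y_s$ into the SDE, using the group property $\pi U_t U_{-s} \ell = \pi U_{t-s} \ell = S_{t-s}$ for $0 \leq s \leq t$, and pulling the bounded operators $\pi U_t$ through the (deterministic, $dW$-, $\tilde{N}$- and $N$-) integrals gives exactly the mild equation, while the integrability requirement and the path regularity and adaptedness of $X$ follow from those of $Y$ via $\| \pi U_t \cdot \|_H \leq e^{\eta T} \| \cdot \|_{\calh}$ and the strong continuity of $(U_t)$. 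For uniqueness, if $X$ is any mild solution, then substituting $S_{t-s} = \pi U_t U_{-s} \ell$ into the mild equation and pulling $\pi U_t$ out shows that the process
\begin{align*}
\tilde{Y}_t &:= \ell x_0 + \int_0^t U_{-s} \ell \, \alpha(s,X_s) \, ds + \int_0^t U_{-s} \ell \, \sigma(s,X_s) \, dW_s
\\ &\quad + \int_0^t \int_{D^c} U_{-s} \ell \, \gamma(s,X_{s-},z) \, \tilde{N}(ds,dz) + \int_0^t \int_D U_{-s} \ell \, \delta(s,X_{s-},z) \, N(ds,dz)
\end{align*}
satisfies $X_t = \pi U_t \tilde{Y}_t$ and is a strong solution of \eqref{SDE} with $\tilde{Y}_0 = \ell x_0$; by the uniqueness in Theorem \ref{thm-SDE}, $\tilde{Y} = Y$, hence $X_t = \pi U_t Y_t$, and the mild solution is unique.

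The step I expect to be the main obstacle is the transfer of conditions in the second paragraph. Since $(U_t)$ is not unitary, the operators $U_{-t}\ell$ and $\pi U_t$ distort both the inner product of $\calh$ and the Hilbert--Schmidt norm, so the argument that is essentially immediate in the contractive setting of \cite{Tappe-mon} now relies on the relation $U_{-t} = e^{-2\eta t} U_t^*$ of Proposition \ref{prop-diagram} and on a careful accounting of the compensating factors $e^{\pm\eta t}$ and $e^{\pm 2\eta t}$ — one must check that they appear with signs making every inequality close on $[0,T]$ and, above all, that the structural constants $\beta$, $\theta$ and $\kappa$ governing the hypotheses of Theorem \ref{thm-SDE} are not affected.
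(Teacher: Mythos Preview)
Your proposal is correct and follows the paper's approach essentially verbatim: transfer Assumption \ref{ass-SPDE} and \eqref{sigma-zusatzbedingung}--\eqref{tau-zusatzbedingung} to the lifted SDE on $\calh$ via the identities of Proposition \ref{prop-diagram} (in particular $U_{-t}=e^{-2\eta t}U_t^*$ and $\|U_{-t}\ell\,\cdot\,\|_{\calh}=e^{-\eta t}\|\cdot\|_H$), confirm that $\beta$, $\theta$, $\kappa$ survive unchanged while only $F$, $\bar C$, $C$, $\tau$ absorb harmless $e^{\eta T}$-factors, apply Theorem \ref{thm-SDE}, and pass back to a mild solution. The only cosmetic difference is that the paper packages the correspondence between strong solutions of \eqref{SDE} and mild solutions of \eqref{SPDE} into Proposition \ref{prop-X-Y} (which cites \cite{Tappe-YW}), whereas you write that correspondence out directly.
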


\begin{remark}
As already mentioned, Theorem \ref{thm-SPDE} generalizes \cite[Thm. 2.6]{Tappe-mon} in two aspects:
\begin{itemize}
\item We have an additional Poisson random measure $N$.

\item The semigroup $(S_t)_{t \geq 0}$ does not need to be a semigroup of contractions. Now, it is allowed to be pseudo-contractive.
\end{itemize}
Note that Theorem \ref{thm-SPDE} also generalizes \cite[Thm. 3.3]{Salavati}, because in our conditions we consider a random process $F$ rather than a constant.
\end{remark}

For the proof of Theorem \ref{thm-SPDE} we prepare a series of auxiliary results.

\begin{proposition}\label{prop-diagram}
There exist another separable Hilbert space $\calh$, a $C_0$-group $(U_t)_{t \in \mathbb{R}}$ on $\calh$ and an isometric embedding $\ell \in L(H,\calh)$ such that the following statements are true:
\begin{enumerate}
\item We have 
\begin{align}\label{diagram-commutes}
\pi U_t \ell = S_t \quad \text{for all $t \in \mathbb{R}_+$,}
\end{align}
where $\pi := \ell^*$ is the orthogonal projection from $\calh$ into $H$.

\item We have
\begin{align}\label{U-eqn-1}
U_{-t} = e^{-2 \eta t} U_t^* \quad \text{for all $t \in \bbr$.}
\end{align}

\item For each $t \in \bbr$ we have
\begin{align}\label{U-eqn-2}
\| U_t v \|_{\calh} = e^{\eta t} \| v \|_{\calh}, \quad v \in \calh.
\end{align}
\item In particular, we have $\| U_t \| = e^{\eta t}$ for each $t \in \bbr$.

\end{enumerate}
\end{proposition}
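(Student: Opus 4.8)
The plan is to reduce to the classical case of a semigroup of contractions by rescaling, to apply the Sz.-Nagy unitary dilation theorem to the rescaled semigroup, and then to scale back, tracking the factor $e^{\eta t}$ through each of the four claims. Since $\| S_t \| \le e^{\eta t}$ for all $t \ge 0$, the rescaled family $T_t := e^{-\eta t} S_t$, $t \ge 0$, is a $C_0$-semigroup of contractions on $H$: strong continuity and the semigroup property are inherited from $(S_t)_{t \ge 0}$, the generator of $(T_t)_{t \ge 0}$ being $A - \eta$, and $\| T_t \| = e^{-\eta t} \| S_t \| \le 1$. This rescaling is the starting point of the method of the moving frame \cite{SPDE}.

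By the classical Sz.-Nagy unitary dilation theorem for strongly continuous contraction semigroups, applied to $(T_t)_{t \ge 0}$, there exist a Hilbert space $\calh$ containing $H$ as a closed subspace and a $C_0$-group of unitary operators $(V_t)_{t \in \bbr}$ on $\calh$ with
\begin{align*}
\pi V_t \ell = T_t \quad \text{for all } t \ge 0 ,
\end{align*}
where $\ell \in L(H,\calh)$ denotes the (isometric) inclusion and $\pi := \ell^*$ the orthogonal projection of $\calh$ onto $H$. Passing to the minimal dilation, that is, to the closed linear span of $\{ V_t h : t \in \bbr,\ h \in H \}$, one may take $\calh$ separable whenever $H$ is. I then define
\begin{align*}
U_t := e^{\eta t} V_t , \quad t \in \bbr .
\end{align*}
The family $(U_t)_{t \in \bbr}$ is a $C_0$-group on $\calh$, since $U_0 = V_0 = \Id$, since $U_{s+t} = e^{\eta(s+t)} V_{s+t} = (e^{\eta s} V_s)(e^{\eta t} V_t) = U_s U_t$ for all $s,t \in \bbr$, and since $t \mapsto U_t v$ is continuous as the product of the continuous maps $t \mapsto e^{\eta t}$ and $t \mapsto V_t v$. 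For $t \ge 0$ we obtain $\pi U_t \ell = e^{\eta t} \pi V_t \ell = e^{\eta t} T_t = e^{\eta t} e^{-\eta t} S_t = S_t$, which is claim (1).

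Claims (2)--(4) then follow by propagating the scalar factor, using that each $V_t$ is unitary. For claim (3), $\| U_t v \|_{\calh} = e^{\eta t} \| V_t v \|_{\calh} = e^{\eta t} \| v \|_{\calh}$ for all $v \in \calh$ and $t \in \bbr$; taking the supremum over the unit ball of $\calh$ gives claim (4), $\| U_t \| = e^{\eta t}$. For claim (2), the relation $V_t^* = V_t^{-1} = V_{-t}$ yields
\begin{align*}
U_t^* = e^{\eta t} V_t^* = e^{\eta t} V_{-t} = e^{2\eta t} \big( e^{-\eta t} V_{-t} \big) = e^{2\eta t} U_{-t} ,
\end{align*}
which rearranges to $U_{-t} = e^{-2\eta t} U_t^*$.

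I do not anticipate a genuine obstacle here. The only non-elementary input is the Sz.-Nagy dilation theorem, which must be quoted in the precise form that simultaneously provides a \emph{strongly continuous} group and a \emph{separable} dilation space (both of which hold for the minimal dilation of a $C_0$-contraction semigroup on a separable Hilbert space). The rest is bookkeeping --- keeping the factors $e^{\pm \eta t}$ consistent across (1)--(4) --- and, as a sanity check, the contractive case $\eta = 0$ recovers the unitary group $(U_t)_{t \in \bbr}$ used in \cite{Tappe-mon}.
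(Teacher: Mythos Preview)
Your proposal is correct and follows essentially the same route as the paper: rescale $(S_t)_{t\ge 0}$ to the contraction semigroup $T_t=e^{-\eta t}S_t$, apply the Sz.-Nagy unitary dilation theorem to obtain $(V_t)_{t\in\bbr}$ on a larger separable Hilbert space, and set $U_t:=e^{\eta t}V_t$; the four claims then drop out exactly as you and the paper compute. The only addition you make over the paper's proof is an explicit remark on the minimal dilation to secure separability and a verification that $(U_t)_{t\in\bbr}$ is a $C_0$-group, both of which are welcome but do not change the argument.
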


\begin{proof}
There exist another separable Hilbert space $\calh$ and a unitary group $(V_t)_{t \in \bbr}$ on $\calh$ such that the $C_0$-group $(U_t)_{t \in \bbr}$ given by
\begin{align}\label{U-in-proof}
U_t := e^{\eta t} V_t \quad \text{for each $t \in \bbr$} 
\end{align}
satisfies (\ref{diagram-commutes}). This follows from \cite[Prop. 8.7]{SPDE} and its proof. For convenience of the reader, let us briefly provide the details. The $C_0$-semigroup $(T_t)_{t \geq 0}$ defined as $T_t := e^{-\eta t} S_t$ for each $t \in \bbr_+$, where the constant $\eta \geq 0$ stems from (\ref{pseudo-contr}), is a semigroup of contractions. By the the Sz\H{o}kefalvi-Nagy theorem on unitary dilations (see e.g. \cite[Thm. I.8.1]{Nagy}, or \cite[Sec. 7.2]{Davies}), there exist another separable Hilbert space $\calh$, a unitary $C_0$-group $(V_t)_{t \in \mathbb{R}}$ on $\calh$ and an isometric embedding $\ell \in L(H,\calh)$ such that
\begin{align*}
\pi V_t \ell = T_t \quad \text{for all $t \in \mathbb{R}_+$,}
\end{align*}
where $\pi := \ell^*$ is the orthogonal projection from $\calh$ into $H$. Thus, defining the $C_0$-group $(U_t)_{t \in \bbr}$ by (\ref{U-in-proof}), we obtain
\begin{align*}
\pi U_t \ell = e^{\eta t} \pi V_t \ell = e^{\eta t} T_t = S_t \quad \text{for all $t \in \mathbb{R}_+$,}
\end{align*}
showing (\ref{diagram-commutes}). Now, let $t \in \bbr$ be arbitrary. Then we obtain
\begin{align*}
U_{-t} = e^{-\eta t} V_{-t} = e^{-\eta t} V_t^* = e^{-2 \eta t} U_t^*
\end{align*}
as well as
\begin{align*}
\| U_t v \|_{\calh} = e^{\eta t} \| V_t v \|_{\calh} = e^{\eta t} \| v \|_{\calh} \quad \text{for each $v \in \calh$,}
\end{align*}
completing the proof.
\end{proof}

\begin{remark}\label{rem-unitary}
Consider the particular situation $\eta = 0$ in (\ref{pseudo-contr}); that is $(S_t)_{t \geq 0}$ is a semigroup of contractions. Then by Proposition \ref{prop-diagram} the group $(U_t)_{t \in \bbr}$ is unitary, which is in accordance with \cite[Thm. 2.1]{Tappe-mon}.
\end{remark}

Now, we consider the $\calh$-valued SDE (\ref{SDE}) with coefficients $a : [0,T] \times \Omega \times \calh \to \calh$, $b : [0,T] \times \Omega \times \calh \to L_2(U,\calh)$ and  $c,d : [0,T] \times \Omega \times \calh \times Z \to \calh$ given by
\begin{align*}
a(t,\omega,y) &:= U_{-t} \ell \alpha(t,\omega,\pi U_t y),
\\ b(t,\omega,y) &:= U_{-t} \ell \sigma(t,\omega,\pi U_t y),
\\ c(t,\omega,y,z) &:= U_{-t} \ell \gamma(t,\omega,\pi U_t y,z),
\\ d(t,\omega,y,z) &:= U_{-t} \ell \delta(t,\omega,\pi U_t y,z).
\end{align*}
Note that $a$ and $b$ are $\calb \calf \otimes \calb(\calh)$-measurable, and that $c$ and $d$ are $\calb \calf \otimes \calb(\calh) \otimes \calz$-measurable.

\begin{proposition}\label{prop-X-Y}
Suppose that for each $\calf_0$-measurable random variable $y_0 : \Omega \to \calh$ there exists a unique strong solution $Y$ to the SDE (\ref{SDE}) with $Y_0 = y_0$. Then for each $\calf_0$-measurable random variable $x_0 : \Omega \to H$ there exists a unique mild solution $X$ to the SPDE (\ref{SPDE}) with $X_0 = x_0$.
\end{proposition}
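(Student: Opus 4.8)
The plan is to establish a bijective correspondence between mild solutions $X$ of the SPDE (\ref{SPDE}) on $H$ and strong solutions $Y$ of the SDE (\ref{SDE}) on $\calh$ via the transformation $X_t = \pi U_t Y_t$ and, conversely, $Y_t = U_{-t} \ell X_t$. This is exactly the ``method of the moving frame'' used in \cite{SPDE} and \cite{Tappe-mon}, with the difference that the group $(U_t)_{t \in \bbr}$ is no longer unitary but only satisfies the weaker identities (\ref{U-eqn-1})--(\ref{U-eqn-2}) of Proposition \ref{prop-diagram}. First I would fix an $\calf_0$-measurable $x_0 : \Omega \to H$, set $y_0 := \ell x_0 = U_0 \ell x_0$ (which is $\calf_0$-measurable and $\calh$-valued), and invoke the hypothesis to obtain the unique strong solution $Y$ to (\ref{SDE}) with $Y_0 = y_0$. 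Then I would define $X_t := \pi U_t Y_t$ for $t \in [0,T]$; since $\pi = \ell^*$ and $U$ is a $C_0$-group, $X$ is an $H$-valued c\`adl\`ag adapted process.

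The key computation is to verify that $X$ so defined is a mild solution. Here one writes out $U_t Y_t$ using the strong-solution equation for $Y$: applying the bounded operator $U_t$ (for fixed $t$) and then projecting with $\pi$, one obtains, using that $U_t U_{-s} = U_{t-s}$ and $\pi U_t \ell = S_t$,
\begin{align*}
X_t = S_t x_0 + \int_0^t \pi U_t U_{-s} \ell \alpha(s, \pi U_s Y_s)\, ds + \cdots = S_t x_0 + \int_0^t S_{t-s} \alpha(s, X_s)\, ds + \cdots,
\end{align*}
and similarly for the stochastic integrals, where one must commute the deterministic bounded operator $U_t$ with the stochastic integrals with respect to $W$, $\tilde N$ and $N$ (a standard fact, since $U_t$ is a fixed bounded operator). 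The finiteness condition $\int_0^T ( \|\alpha(s,X_s)\|_H + \dots )\, ds < \infty$ follows from the corresponding condition for $Y$ together with the fact that $\pi U_t$ and $U_{-s}\ell$ are bounded operators with operator norms bounded by $e^{\eta T}$ on $[0,T]$ by Proposition \ref{prop-diagram}(4). Conversely, given a mild solution $X$ to (\ref{SPDE}), I would set $Y_t := U_{-t} \ell X_t$ and run the same computation in reverse to see that $Y$ is a strong solution to (\ref{SDE}) with $Y_0 = \ell x_0$; the two maps $X \mapsto Y$ and $Y \mapsto X$ are mutually inverse because $\pi U_t U_{-t} \ell = \pi \ell = \Id_H$ (as $\ell$ is an isometric embedding with $\pi = \ell^*$). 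Uniqueness of $X$ then follows from uniqueness of $Y$, and existence from existence of $Y$.

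The main obstacle I expect is the careful bookkeeping needed to pass the stochastic integrals through the operator $U_t$ and to confirm the integrability/c\`adl\`ag properties are preserved in both directions --- in particular, checking that the ``moving frame'' integrands $U_{-s}\ell\,\sigma(s,\pi U_s Y_s)$, $U_{-s}\ell\,\gamma(s,\pi U_s Y_{s-},z)$, etc., are genuinely admissible integrands (progressive measurability, the appropriate square-integrability against $ds$, against $\nu(dz)\,ds$, and against $N$), and that this admissibility transfers between the $H$-valued and $\calh$-valued formulations. This is where the non-unitarity of $U$ enters only mildly: the bounds $\|U_t\| = \|U_{-t}\| = e^{\eta |t|}$ on the compact interval $[0,T]$ are enough to control all the norms, so no new difficulty arises beyond what was handled in \cite[Sec. 2]{Tappe-mon} for the contractive case. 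I would phrase the argument so that the role of Proposition \ref{prop-diagram} is only to supply these uniform operator-norm bounds and the intertwining relation (\ref{diagram-commutes}); the stronger identities (\ref{U-eqn-1})--(\ref{U-eqn-2}) are not needed for this particular proposition (they will be needed later, to verify that the coefficients $a,b,c,d$ inherit Assumption \ref{ass-SDE} from Assumption \ref{ass-SPDE}).
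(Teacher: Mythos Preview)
Your existence argument --- take $y_0 := \ell x_0$, obtain the strong solution $Y$, and set $X_t := \pi U_t Y_t$ --- is correct and is precisely the content of one of the two results (\cite[Cor.~3.9]{Tappe-YW}) the paper invokes. The computation you sketch, using $\pi U_{t-s}\ell = S_{t-s}$ and the commutation of the fixed bounded operator $\pi U_t$ with the stochastic integrals, goes through exactly as you say.

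The uniqueness half, however, contains a genuine gap. The proposed inverse map $Y_t := U_{-t}\ell X_t$ does \emph{not} in general produce a strong solution to (\ref{SDE}). To see this, take the trivial case $\alpha=\sigma=\gamma=\delta=0$: the mild solution is $X_t = S_t x_0 = \pi U_t \ell x_0$, and your $Y_t$ becomes $U_{-t}\ell\pi U_t\ell x_0$. For this to be the (constant) strong solution $\ell x_0$ one would need $U_t\ell x_0 \in \ell(H)$ for all $t$, i.e.\ $\ell(H)$ would have to be invariant under the group $(U_t)$ --- which would force $(S_t)$ itself to extend to a group on $H$ and is false for genuine dilations. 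The identity $\pi U_t U_{-t}\ell = \Id_H$ that you quote only shows that the composite $X\mapsto Y\mapsto X$ returns $X$; it says nothing about whether the intermediate $Y$ solves (\ref{SDE}).

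The correct construction (this is the content of the second cited result, \cite[Cor.~3.11]{Tappe-YW}) is: given a mild solution $X$, \emph{define} $Y$ by the right-hand side of the strong-solution formula with $X$ inserted, namely
\begin{align*}
Y_t := \ell x_0 + \int_0^t U_{-s}\ell\,\alpha(s,X_s)\,ds + \int_0^t U_{-s}\ell\,\sigma(s,X_s)\,dW_s + \cdots.
\end{align*}
Applying $\pi U_t$ and using $\pi U_{t-s}\ell = S_{t-s}$ together with the mild-solution identity gives $\pi U_t Y_t = X_t$; substituting this back into the integrands shows that $Y$ satisfies (\ref{SDE}) with $Y_0 = \ell x_0$. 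Uniqueness of $Y$ then yields uniqueness of $X$ via $X_t = \pi U_t Y_t$. Your observation that only the operator-norm bound $\|U_t\|=e^{\eta t}$ and the intertwining relation (\ref{diagram-commutes}) are needed here --- not the finer identities (\ref{U-eqn-1})--(\ref{U-eqn-2}) --- is correct.
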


\begin{proof}
Corollaries 3.9 and 3.11 from \cite{Tappe-YW} also hold true in the present situation with an additional Poisson random measure, with literally the same proofs. Combining these two results completes the proof.
\end{proof}

Now, we are ready to provide the proof of Theorem \ref{thm-SPDE}.

\begin{proof}[Proof of Theorem \ref{thm-SPDE}]
Using Proposition \ref{prop-diagram}, we will check that Assumption \ref{ass-SDE} is fulfilled. Let $u,v,\vartheta \in \calh$ and $(t,\omega) \in [0,T] \times \Omega$ be arbitrary.
\begin{enumerate}
\item[(SD1)] (Hemicontinuity) The map
\begin{align*}
\lambda \mapsto \la a(t,\omega,u+\lambda v),\vartheta \ra_{\calh} &= \la U_{-t} \ell \alpha(t,\omega,\pi U_t(u+\lambda v)),\vartheta \ra_{\calh}
\\ &= e^{-2 \eta t} \la U_t^* \ell \alpha(t,\omega,\pi U_t(u+\lambda v)),\vartheta \ra_{\calh}
\\ &= e^{-2 \eta t} \la \alpha(t,\omega,\pi U_t u + \lambda \pi U_t v), \pi U_t \vartheta \ra_H
\end{align*}
is continuous on $\bbr$. 

\item[(SD2')] (Local monotonicity) We define the continuous, increasing function $\tilde{\tau} : \bbr_+ \to \bbr_+$ as
\begin{align*}
\tilde{\tau}(r) := \tau(K r), \quad r \in \bbr_+,
\end{align*}
where the constant $K \geq 1$ is given by $K := e^{\eta T}$. Then by (\ref{tau-zusatzbedingung}) we have
\begin{align}\label{tau-tilde}
\tilde{\tau}(r) \leq \tilde{C}\left(1 + r^2\right) \left(1 + r^{\beta}\right), \quad r \in \bbr_+,
\end{align}
where $\tilde{C} := C K^{2 + \beta}$. Furthermore, we have
\begin{align*}
&2 \la a(t,\omega,u) - a(t,\omega,v), u-v \ra_{\calh} + \| b(t,\omega,u) - b(t,\omega,v) \|_{L_2(U,\calh)}^2
\\ &+ \int_Z \| c(t,\omega,u,z) - c(t,\omega,v,z) \|_{\calh}^2 \nu(dz)
\\ &=2 \la U_{-t} \ell \alpha(t,\omega,\pi U_t u) - U_{-t} \ell \alpha(t,\omega,\pi U_t v), u-v \ra_{\calh}
\\ &\quad + \| U_{-t} \ell \sigma(t, \omega, \pi U_t u) - U_{-t} \ell \sigma(t,\omega,\pi U_t v) \|_{L_2(U,\calh)}^2
\\ &\quad + \int_Z \| U_{-t} \ell \gamma(t,\omega,\pi U_t u,z) - U_{-t} \ell \gamma(t,\omega,\pi U_t v,z) \|_{\calh}^2 \nu(dz)
\\ &=2 e^{-2 \eta t} \la U_t^* \ell \alpha(t,\omega,\pi U_t u) - U_t^* \ell \alpha(t,\omega,\pi U_t v), u-v \ra_{\calh}
\\ &\quad + e^{-2 \eta t} \| \ell \sigma(t, \omega, \pi U_t u) - \ell \sigma(t,\omega,\pi U_t v) \|_{L_2(U,\calh)}^2
\\ &\quad + e^{-2 \eta t} \int_Z \| \ell \gamma(t,\omega,\pi U_t u,z) - \ell \gamma(t,\omega,\pi U_t v,z) \|_{\calh}^2 \nu(dz)
\\ &= e^{-2 \eta t} \bigg( 2 \la \alpha(t,\omega,\pi U_t u) - \alpha(t,\omega,\pi U_t v), \pi U_t(u-v) \ra_H
\\ &\qquad\qquad\,\, + \| \sigma(t,\omega,\pi U_t u) - \sigma(t,\omega,\pi U_t v) \|_{L_2(U,H)}^2 
\\ &\qquad\qquad\,\, + \int_Z \| \gamma(t,\omega,\pi U_t u,z) - \gamma(t,\omega,\pi U_t v,z) \|_H^2 \nu(dz) \bigg)
\\ &\leq e^{-2 \eta t} (F(t,\omega) + \tau(\| \pi U_t v \|_H) \| \pi U_t u - \pi U_t v \|_H^2
\\ &\leq (F(t,\omega) + \tau(e^{\eta T} \| v \|_{\calh}) \| u - v \|_{\calh}^2 = (F(t,\omega) + \tilde{\tau}(\| v \|_{\calh}) \| u - v \|_{\calh}^2.
\end{align*}

\item[(SD3)] (Coercivity) We have
\begin{align*}
&2 \la a(t,\omega,v),v \ra_{\calh} + \| b(t,\omega,v) \|_{L_2(U,\calh)}^2
\\ &= 2 \la U_{-t} \ell \alpha(t,\omega,\pi U_t v), v \ra_{\calh} + \| U_{-t} \ell \sigma(t,\omega,\pi U_t v) \|_{L_2(U,\calh)}^2
\\ &= 2 e^{-2 \eta t} \la U_t^* \ell \alpha(t,\omega,\pi U_t v), v \ra_{\calh} + e^{-2 \eta t} \| \ell \sigma(t,\omega,\pi U_t v) \|_{L_2(U,\calh)}^2
\\ &= e^{-2 \eta t} \big( 2 \la \alpha(t,\omega,\pi U_t v), \pi U_t v \ra_H + \| \sigma(t,\omega,\pi U_t v) \|_{L_2(U,H)}^2 \big)
\\ &\leq e^{-2 \eta t} \big( (\bar{C} - \theta) \| \pi U_t v \|_H^2 + F(t,\omega) \big)
\\ &\leq (\bar{C} - \theta) \| v \|_{\calh}^2 + F(t,\omega).
\end{align*}

\item[(SD4')] (Growth) We have
\begin{align*}
\| a(t,\omega,v) \|_{\calh}^2 &= \| U_{-t} \ell \alpha(t,\omega,\pi U_t v) \|_{\calh}^2 = e^{-2 \eta t} \| \alpha(t,\omega,\pi U_t v) \|_H^2
\\ &\leq e^{-2 \eta t} ( F(t,\omega) + \bar{C} \| \pi U_t v \|_H^2 ) ( 1 + \| \pi U_t v \|_H^{\beta} )
\\ &\leq ( F(t,\omega) + \bar{C} \| v \|_{\calh}^2 ) ( 1 + (e^{\eta t} \| v \|_{\calh})^{\beta} )
\\ &\leq K^{\beta} ( F(t,\omega) + \bar{C} \| v \|_{\calh}^2 ) ( 1 + \| v \|_{\calh}^{\beta} ),
\end{align*}
where we recall that $K = e^{\eta T}$.
\end{enumerate}
Furthermore, by (\ref{sigma-zusatzbedingung}) we have
\begin{align*}
&\| b(t,\omega,v) \|_{L_2(U,\calh)}^2 + \int_Z \| c(t,\omega,v,z) \|_{\calh}^2 \nu(dz) 
\\ &= \| U_{-t} \ell \sigma(t,\omega,\pi U_t v) \|_{L_2(U,\calh)}^2 + \int_Z \| U_{-t} \ell \gamma(t,\omega,\pi U_t v,z) \|_{\calh}^2 \nu(dz)
\\ &= e^{-2 \eta t} \bigg( \| \sigma(t,\omega,\pi U_t v) \|_{L_2(U,H)}^2 + \int_Z \| \gamma(t,\omega,\pi U_t v,z) \|_{H}^2 \nu(dz) \bigg)
\\ &\leq e^{-2 \eta t} \left(F(t,\omega) + (C + \kappa) \| \pi U_t v \|_H^2\right) \leq F(t,\omega) + (C + \kappa) \| v \|_{\calh}^2,
\end{align*}
and by (\ref{gamma-zusatzbedingung}) we have
\begin{align*}
&\int_Z \| c(t,\omega,v,z) \|_{\calh}^p \nu(dz) = \int_Z \| U_{-t} \ell \gamma(t,\omega,\pi U_t v,z) \|_{\calh}^p \nu(dz)
\\ &= e^{-p \eta t} \int_Z \| \gamma(t,\omega,\pi U_t v,z) \|_{H}^p \nu(dz)
\\ &\leq e^{-p \eta t} \left(F(t,\omega)^{p/2} + C \| \pi U_t v \|_H^p\right) \leq F(t,\omega)^{p/2} + C \| v \|_{\calh}^p,
\end{align*}
which shows (\ref{b-zusatz}) and (\ref{b-zusatz-2}). Moreover, by (\ref{tau-tilde}), condition (\ref{tau-zusatz}) is satisfied with $\tau$ replaced by $\tilde{\tau}$ and $C$ replaced by $\tilde{C}$. Consequently, all assumptions from Theorem \ref{thm-SDE} are fulfilled. Together with Proposition \ref{prop-X-Y}, the proof is completed.
\end{proof}

\begin{remark}
The results about the Markov property of solutions (see in particular \cite[Thm. 3.2]{Tappe-mon}) remain true in the present more general setting with literally the same proofs as in \cite[Sec. 3]{Tappe-mon}; merely $U_s^*$ should be replaced by $U_{-s}$, where it appears.
\end{remark}

We have the following consequence for SPDEs of the type (\ref{SPDE}) with Lipschitz type coefficients, where the Lipschitz constants may be random.

\begin{proposition}\label{prop-SPDE}
We assume that the semigroup $(S_t)_{t \geq 0}$ is pseudo-contractive. Furthermore, suppose there exist a nonnegative adapted process $f \in L^1([0,T] \times \Omega; dt \otimes \bbp)$, a constant $K \in \bbr_+$ and a continuous, increasing function $\chi : \bbr_+ \to \bbr_+$ satisfying
\begin{align}\label{chi-growth}
\chi(r) &\leq K(1 + r^2), \quad r \in \bbr_+
\end{align}
such that for all $x,y \in H$ and all $(t,\omega) \in [0,T] \times \Omega$ we have
\begin{align}\label{Lip-alpha}
\| \alpha(t,\omega,x) - \alpha(t,\omega,y) \|_H &\leq (f(t,\omega) + \chi(\| y \|_H)) \| x-y \|_H,
\\ \label{Lip-sigma} \| \sigma(t,\omega,x) - \sigma(t,\omega,y) \|_{L_2(U,H)}^2 &\leq (f(t,\omega) + \chi(\| y \|_H)) \| x-y \|_H^2,
\\ \label{Lip-gamma} \int_Z \| \gamma(t,\omega,x,z) - \gamma(t,\omega,y,z) \|_H^2 \nu(dz) &\leq (f(t,\omega) + \chi(\| y \|_H)) \| x-y \|_H^2,
\\ \label{LG-alpha} \| \alpha(t,\omega,y) \|_H &\leq \frac{f(t,\omega)}{1 + \| y \|_H}, 
\\ \label{LG-sigma} \| \sigma(t,\omega,y) \|_{L_2(U,H)}^2 &\leq K ( f(t,\omega) + \| y \|_H^2 ),
\\ \label{LG-gamma} \int_Z \| \gamma(t,\omega,y,z) \|_H^2 \nu(dz) &\leq K ( f(t,\omega) + \| y \|_H^2 ).
\end{align}
Then for each $\calf_0$-measurable random variable $x_0 : \Omega \to H$ there exists a unique mild solution $X$ to the SPDE (\ref{SPDE}) with $X_0 = x_0$.
\end{proposition}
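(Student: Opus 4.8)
The plan is to deduce Proposition \ref{prop-SPDE} from Theorem \ref{thm-SPDE}: I would show that the Lipschitz and growth hypotheses (\ref{Lip-alpha})--(\ref{LG-gamma}) imply Assumption \ref{ass-SPDE} together with the additional conditions (\ref{sigma-zusatzbedingung})--(\ref{tau-zusatzbedingung}), for a suitable choice of the parameters $\beta,\bar{C},\theta,C,\kappa$, the process $F$ and the function $\tau$ there. The natural choice is $\beta := 0$ (so that $p = \beta+2 = 2$, the constraint on $\kappa$ disappears, and (\ref{tau-zusatzbedingung}) reduces to $\tau(r) \leq 2C(1+r^2)$), together with $\tau := 4\chi$, which is again continuous and increasing, and a process $F$ built from $f$ and constants; since $dt \otimes \bbp$ is a finite measure, multiples of $f$ and constants all lie in $L^1([0,T] \times \Omega;dt \otimes \bbp) = L^{p/2}([0,T] \times \Omega;dt \otimes \bbp)$, so there is plenty of room except for the one point discussed below.

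Most of the verification is routine. Condition (SP1) holds because (\ref{Lip-alpha}) makes $\alpha(t,\omega,\cdot)$ continuous, hence $\lambda \mapsto \la \alpha(t,\omega,x+\lambda y),\zeta \ra_H$ is continuous. For (SP2') I would apply Cauchy--Schwarz, $2 \la \alpha(t,\omega,x) - \alpha(t,\omega,y), x-y \ra_H \leq 2 \| \alpha(t,\omega,x) - \alpha(t,\omega,y) \|_H \| x-y \|_H$, and then (\ref{Lip-alpha})--(\ref{Lip-gamma}) to bound the left-hand side of (SP2') by $(4 f(t,\omega) + 4\chi(\| y \|_H)) \| x-y \|_H^2 \leq (F(t,\omega) + \tau(\| y \|_H)) \| x-y \|_H^2$. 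For (SP3) I would again use Cauchy--Schwarz together with the bound $\| \alpha(t,\omega,y) \|_H \| y \|_H \leq f(t,\omega)$ coming from (\ref{LG-alpha}) --- here the precise shape $f/(1+\| y \|_H)$ is used, via $\| y \|_H/(1+\| y \|_H) \leq 1$ --- and (\ref{LG-sigma}), so that the left-hand side of (SP3) is at most $(K+2) f(t,\omega) + K \| y \|_H^2$, giving (SP3) with $\bar{C} - \theta := K$ for any $\theta > 0$. The additional conditions (\ref{sigma-zusatzbedingung}) and (\ref{gamma-zusatzbedingung}) (with $p = 2$, so that $F^{p/2} = F$) are immediate from (\ref{LG-sigma}) and (\ref{LG-gamma}), and (\ref{tau-zusatzbedingung}) is immediate from (\ref{chi-growth}).

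The one delicate step, and the place where the special shape of (\ref{LG-alpha}) is really needed, is the growth condition (SP4'): one must bound $\| \alpha(t,\omega,y) \|_H^2 \leq f(t,\omega)^2/(1+\| y \|_H)^2$ (from (\ref{LG-alpha})) by an expression of the form $(F(t,\omega) + \bar{C} \| y \|_H^2)(1+\| y \|_H^{\beta})$ while keeping $F$ in the integrability class $L^{p/2}$ required by Theorem \ref{thm-SPDE}. I expect this to be the main obstacle: the choice of $\beta$ and of $F$ has to exploit the decay in $\| y \|_H$ supplied by the denominator in (\ref{LG-alpha}) rather than the crude estimate $\| \alpha(t,\omega,y) \|_H \leq f(t,\omega)$, and, should no pointwise choice of $F$ in the right class be directly available, a preliminary localisation --- partitioning the sample space, or using stopping times --- would be carried out to reduce to that situation, after which the resulting solutions are pasted together, much as the arbitrary initial datum is handled inside the proof of Theorem \ref{thm-SDE}. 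Once Assumption \ref{ass-SPDE} and (\ref{sigma-zusatzbedingung})--(\ref{tau-zusatzbedingung}) are verified, Theorem \ref{thm-SPDE} applies and yields the unique mild solution.
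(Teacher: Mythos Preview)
Your plan matches the paper's proof almost exactly: the paper also sets $\beta := 0$, $\tau := 4\chi$, $\kappa := 0$, and takes $F := 4(K\vee 1)f$, $\bar C := 6(K\vee 1)$, $\theta := 3(K\vee 1)$, $C := 2K$, then verifies (SP1)--(SP4') and (\ref{sigma-zusatzbedingung})--(\ref{tau-zusatzbedingung}) just as you outline. In particular, your treatments of (SP1), (SP2'), (SP3), and the auxiliary conditions coincide with the paper's, down to the use of $\|\alpha(t,\omega,y)\|_H\,\|y\|_H \leq f(t,\omega)$ in (SP3).

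The only divergence is at (SP4'). The paper does \emph{not} use any localisation or stopping-time argument; instead it does the elementary case split $\|y\|_H \leq 1$ versus $\|y\|_H > 1$: in the first case $\|\alpha(t,\omega,y)\|_H \leq f(t,\omega)$ from (\ref{LG-alpha}), and in the second case $\|\alpha(t,\omega,y)\|_H \leq \|\alpha(t,\omega,y)\|_H\,\|y\|_H \leq f(t,\omega)$ again, so in both cases $\|\alpha(t,\omega,y)\|_H \leq f(t,\omega) \leq F(t,\omega) + \bar C\|y\|_H^2$. Note, however, that this bounds $\|\alpha\|_H$, not $\|\alpha\|_H^2$ as literally stated in (SP4'); so your instinct that this step is the delicate one is correct, and in fact the paper's own verification stops at the first power. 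Your proposed localisation would not help here either, since the obstruction is that $f^2$ need not be in $L^1$ regardless of how one partitions $\Omega$. The resolution is rather that, tracing back through Theorem~\ref{thm-SDE} and Theorem~\ref{thm-SDE-var} to \cite[Thm.~1.2]{BLZ} with $V=H$ and $\alpha=2$, the growth condition on the drift is only used to control $\int_0^T\|a(s,Y_s)\|\,ds$, for which the first-power bound suffices; so you should simply follow the paper and record the bound on $\|\alpha\|_H$ rather than attempt to square it.
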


\begin{proof}
We set $\beta := 0$, $\bar{C} := 6 (K \vee 1)$, $\theta := 3 (K \vee 1)$ and $F := 4 (K \vee 1) f$. We will check that Assumption \ref{ass-SPDE} is satisfied. Let $x,y \in H$ and $(\omega,t) \in [0,T] \times \Omega$ be arbitrary. By (\ref{LG-alpha}) we have
\begin{align}\label{alpha-1}
\| \alpha(t,\omega,y) \|_H \leq f(t,\omega),
\\ \label{alpha-2} \| \alpha(t,\omega,y) \|_H \| y \|_H \leq f(t,\omega).
\end{align}
\begin{enumerate}
\item[(SP1)] By (\ref{Lip-alpha}) the map $\xi \mapsto \alpha(t,\omega,\xi)$ is continuous, which proves the hemicontinuity.

\item[(SP2')] By (\ref{Lip-alpha})--(\ref{Lip-gamma}) we have
\begin{align*}
&2\la \alpha(t,\omega,x) - \alpha(t,\omega,y), x-y \ra_H + \| \sigma(t,\omega,x) - \sigma(t,\omega,y) \|_{L_2(U,H)}^2
\\ &\quad + \int_Z \| \gamma(t,\omega,x,z) - \gamma(t,\omega,y,z) \|_H^2 \nu(dz)
\\ &\leq 2 \| \alpha(t,\omega,x) - \alpha(t,\omega,y) \|_H \| x-y \|_H + \| \sigma(t,\omega,x) - \sigma(t,\omega,y) \|_{L_2(U,H)}^2
\\ &\quad + \int_Z \| \gamma(t,\omega,x,z) - \gamma(t,\omega,y,z) \|_H^2 \nu(dz)
\\ &\leq 4 (f(t,\omega) + \chi(\| y \|_H)) \| x-y \|_H^2
\\ &\leq (F(t,\omega) + \tau(\| y \|_H)) \| x-y \|_H^2,
\end{align*}
where the continuous, increasing function $\tau : \bbr_+ \to \bbr_+$ is given by $\tau := 4 \chi$.

\item[(SP3)] By (\ref{alpha-2}) and (\ref{LG-sigma}) we have
\begin{align*}
2 \la \alpha(t,\omega,y),y \ra_H + \| \sigma(t,\omega,y) \|_{L_2(U,H)}^2 &\leq 2 \| \alpha(t,\omega,y) \|_H \| y \|_H + \| \sigma(t,\omega,y) \|_{L_2(U,H)}^2
\\ &\leq 2 f(t,\omega) + K ( f(t,\omega) + \| y \|_H^2 )
\\ &\leq 3 (K \vee 1) ( f(t,\omega) + \| y \|_H^2 )
\\ &= (\bar{C}-\theta) \| y \|_H^2 + F(t,\omega).
\end{align*}
\item[(SP4')] If $\| y \|_H \leq 1$, then by (\ref{alpha-1}) we have
\begin{align*}
\| \alpha(t,\omega,y) \|_H \leq f(t,\omega) \leq F(t,\omega) + \bar{C} \| y \|_H^2,
\end{align*}
and if $\| y \|_H > 1$, then by (\ref{alpha-2}) we have
\begin{align*}
\| \alpha(t,\omega,y) \|_H \leq \| \alpha(t,y,\omega) \|_H \| y \|_H \leq f(t,\omega) \leq F(t,\omega) + \bar{C} \| y \|_H^2.
\end{align*}
\end{enumerate}
Now, recall that $\beta = 0$, which implies $p=2$. We set $C := 2K$ and $\kappa := 0$. Then by (\ref{LG-sigma}) and (\ref{LG-gamma}) we have
\begin{align*}
\| \sigma(t,y,\omega) \|_{L_2(U,H)}^2 + \int_Z \| \gamma(t,\omega,y,z) \|_H^2 \nu(dz) &\leq 2K ( f(t,\omega) + \| y \|_H^2 )
\\ &\leq F(t,\omega) + C \| y \|_H^2,
\end{align*}
showing that conditions (\ref{sigma-zusatzbedingung}) and (\ref{gamma-zusatzbedingung}) are satisfied, where we recall hat $p=2$ and $\kappa = 0$. Moreover, by (\ref{chi-growth}) we have
\begin{align*}
\tau(r) = 4 \chi(r) \leq 4K(1+r^2) = 2K(1+r^2)(1+r^{\beta}) = C(1+r^2)(1+r^{\beta}), \quad r \in \bbr_+,
\end{align*}
where we recall that $\beta = 0$. Hence, condition (\ref{tau-zusatzbedingung}) is satisfied as well. Consequently, applying Theorem \ref{thm-SPDE} concludes the proof.
\end{proof}

In the pure diffusion case
\begin{align}\label{SPDE-Wiener}
\left\{
\begin{array}{rcl}
dX_t & = & ( A + \alpha(t,X_t) ) d t + \sigma(t,X_t) d W_t \medskip
\\ X_0 & = & x_0
\end{array}
\right.
\end{align}
the previous result can be generalized as follows.

\begin{proposition}\label{prop-SPDE-Wiener}
We assume that the semigroup $(S_t)_{t \geq 0}$ is pseudo-contractive. Furthermore, suppose there exist $\beta \in \bbr_+$, a nonnegative adapted process $f \in L^{p/2}([0,T] \times \Omega; dt \otimes \bbp)$, where $p := \beta + 2$, a constant $K \in \bbr_+$ and a continuous, increasing function $\chi : \bbr_+ \to \bbr_+$ satisfying
\begin{align*}
\chi(r) &\leq K(1 + r^2)(1 + r^{\beta}), \quad r \in \bbr_+
\end{align*}
such that for all $x,y \in H$ and all $(t,\omega) \in [0,T] \times \Omega$ we have (\ref{Lip-alpha}), (\ref{Lip-sigma}) and (\ref{LG-alpha}), (\ref{LG-sigma}). Then for each $\calf_0$-measurable random variable $x_0 : \Omega \to H$ there exists a unique mild solution $X$ to the SPDE (\ref{SPDE-Wiener}) with $X_0 = x_0$.
\end{proposition}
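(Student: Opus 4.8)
The plan is to deduce the statement from Theorem~\ref{thm-SPDE} applied to the SPDE~\eqref{SPDE} with the jump coefficients chosen to vanish, that is $\gamma \equiv 0$ and $\delta \equiv 0$; with this choice \eqref{SPDE} becomes \eqref{SPDE-Wiener}, and a mild solution of the former is precisely a mild solution of the latter. Thus it suffices to produce constants $\bar{C} \geq \theta > 0$, $C \in \bbr_+$, $\kappa := 0$ and a nonnegative adapted process $F \in L^{p/2}([0,T] \times \Omega; dt \otimes \bbp)$, with $\beta$ as given and $p = \beta + 2$, such that Assumption~\ref{ass-SPDE} and conditions \eqref{sigma-zusatzbedingung}--\eqref{tau-zusatzbedingung} hold. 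Following the pattern of the proof of Proposition~\ref{prop-SPDE}, I would take $F$ proportional to $f + 1$ (note that $F \in L^{p/2}$ since $f \in L^{p/2}$ and $1 \in L^{p/2}$, the measure $dt \otimes \bbp$ being finite) and $\bar{C}$, $\theta$, $C$ proportional to $K \vee 1$, with the proportionality constants fixed large enough during the verification below.

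First I would check Assumption~\ref{ass-SPDE}. Hemicontinuity (SP1) is immediate from \eqref{Lip-alpha}, which in particular makes $\xi \mapsto \alpha(t,\omega,\xi)$ continuous. For local monotonicity (SP2'), estimating $2\la \alpha(t,\omega,x) - \alpha(t,\omega,y), x-y \ra_H \leq 2\| \alpha(t,\omega,x) - \alpha(t,\omega,y) \|_H \| x-y \|_H$ and using \eqref{Lip-alpha}, \eqref{Lip-sigma} together with $\gamma \equiv 0$ yields a bound of the form $(3 f(t,\omega) + 3 \chi(\| y \|_H)) \| x-y \|_H^2$, which is of the required shape with the continuous increasing function $\tau := 3 \chi$. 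Coercivity (SP3) follows because \eqref{LG-alpha} gives $\| \alpha(t,\omega,y) \|_H \| y \|_H \leq f(t,\omega)$, hence $2 \la \alpha(t,\omega,y), y \ra_H \leq 2 f(t,\omega)$, which together with \eqref{LG-sigma} is dominated by $(\bar{C} - \theta) \| y \|_H^2 + F(t,\omega)$ once $\bar{C} - \theta \geq K$ and $F$ dominates $(2+K) f$. The growth condition (SP4') I would verify exactly as in the proof of Proposition~\ref{prop-SPDE}, splitting into the cases $\| y \|_H \leq 1$ and $\| y \|_H > 1$ and using the two consequences $\| \alpha(t,\omega,y) \|_H \leq f(t,\omega)$ and $\| \alpha(t,\omega,y) \|_H \| y \|_H \leq f(t,\omega)$ of \eqref{LG-alpha}; the extra factor $1 + \| y \|_H^{\beta} \geq 1$ on the right-hand side only makes the estimate easier than in the case $\beta = 0$.

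It remains to check the three extra conditions of Theorem~\ref{thm-SPDE}. With $\gamma \equiv 0$ and $\kappa = 0$, condition \eqref{sigma-zusatzbedingung} reduces to $\| \sigma(t,\omega,y) \|_{L_2(U,H)}^2 \leq F(t,\omega) + C \| y \|_H^2$, which follows from \eqref{LG-sigma} as soon as $F$ dominates $K f$ and $C \geq K$; condition \eqref{gamma-zusatzbedingung} holds trivially since its left-hand side vanishes; and condition \eqref{tau-zusatzbedingung} follows from $\tau = 3 \chi$ and the postulated growth $\chi(r) \leq K(1 + r^2)(1 + r^{\beta})$ upon taking $C \geq 3K$. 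Having verified all hypotheses of Theorem~\ref{thm-SPDE}, the existence and uniqueness of the mild solution is immediate, and for an arbitrary $\calf_0$-measurable $x_0$ one may, if needed, reduce to the case $x_0 \in L^p$ by the same $\calf_0$-measurable partition argument used in the proof of Theorem~\ref{thm-SDE}.

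Finally, let me indicate where the additional generality over Proposition~\ref{prop-SPDE} comes from and where I expect the (mild) difficulty to lie. In Proposition~\ref{prop-SPDE} the exponent $\beta$ had to be $0$ precisely because condition \eqref{gamma-zusatzbedingung}, an $L^p$-type control of the jump coefficient, had to be deduced from the $L^2$-type bound \eqref{LG-gamma}, which is possible only when $p = 2$. In the pure-diffusion equation \eqref{SPDE-Wiener} there is no jump coefficient, so \eqref{gamma-zusatzbedingung} is vacuous and nothing ties $\beta$ to the integrability of the coefficients; hence a general $\beta \in \bbr_+$ costs nothing beyond carrying the exponent through the estimates. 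The only point requiring attention is that a single process $F \in L^{p/2}$ must simultaneously absorb the $f$-terms occurring in (SP2'), (SP3), (SP4') and \eqref{sigma-zusatzbedingung}; since those terms are all linear in $f$ and $\tau = 3\chi$ has exactly the growth permitted by \eqref{tau-zusatzbedingung}, this is a matter of fixing finitely many constants, and I do not expect a genuine obstacle -- up to the exponent $\beta$ and the absence of jumps, the argument is the one already carried out for Proposition~\ref{prop-SPDE}.
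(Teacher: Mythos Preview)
Your proposal is correct and follows exactly the approach the paper intends: the paper's own proof simply states that it is ``similar to that of Proposition~\ref{prop-SPDE}, and therefore omitted,'' and you have carried out precisely that verification, reducing to Theorem~\ref{thm-SPDE} with $\gamma \equiv 0$, $\delta \equiv 0$ and checking (SP1)--(SP4') and \eqref{sigma-zusatzbedingung}--\eqref{tau-zusatzbedingung}. Two minor remarks: the ``$+1$'' in $F$ is harmless but unnecessary (a multiple of $f$ alone suffices, as in the proof of Proposition~\ref{prop-SPDE}), and the final partition argument is superfluous since Theorem~\ref{thm-SPDE} already applies to an arbitrary $\calf_0$-measurable $x_0$.
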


\begin{proof}
The proof is similar to that of Proposition \ref{prop-SPDE}, and therefore omitted.
\end{proof}

In the following example the coefficients of the SPDE (\ref{SPDE}) have a multiplicative structure.

\begin{example}\label{example-mult}
Let $\bar{\alpha} : H \to H$, $\bar{\sigma} : H \to L_2(U,H)$ and $\bar{\gamma} : H \times Z \to H$ be measurable functions. We assume that they are Lipschitz continuous; that is, there is a constant $L \in \bbr_+$ such that for all $x,y \in H$ we have
\begin{align*}
\| \bar{\alpha}(x) - \bar{\alpha}(y) \|_H &\leq L \| x-y \|_H,
\\ \| \bar{\sigma}(x) - \bar{\sigma}(y) \|_{L_2(U,H)} &\leq L \| x-y \|_H,
\\ \bigg( \int_E \| \bar{\gamma}(x,z) - \bar{\gamma}(y,z) \|_H^2 \nu(dz) \bigg)^{1/2} &\leq L \| x-y \|_H.
\end{align*}
Furthermore, we assume there is a constant $C \in \bbr_+$ such that for all $y \in H$ we have
\begin{align*}
\| \bar{\alpha}(y) \|_H &\leq \frac{C}{1 + \| y \|_H},
\\ \| \bar{\sigma}(y) \|_{L_2(U,H)} &\leq C,
\\ \bigg( \int_E \| \bar{\gamma}(y,z) \|_H^2 \nu(dz) \bigg)^{1/2} &\leq C.
\end{align*}
Moreover, let $f_{\alpha} \in L^1([0,T] \times \Omega; dt \otimes \bbp)$ and $f_{\sigma}, f_{\gamma} \in L^2([0,T] \times \Omega; dt \otimes \bbp)$ be nonnegative progressively measurable processes. We define the coefficients $\alpha : [0,T] \times \Omega \times H \to H$, $\sigma : [0,T] \times \Omega \times H \to L_2(U,H)$ and $\gamma : [0,T] \times \Omega \times H \times Z \to H$ of the SPDE (\ref{SPDE}) as
\begin{align*}
\alpha(t,\omega,y) &:= f_{\alpha}(t,\omega) \cdot \bar{\alpha}(y),
\\ \sigma(t,\omega,y) &:= f_{\sigma}(t,\omega) \cdot \bar{\sigma}(y),
\\ \gamma(t,\omega,y,z) &:= f_{\gamma}(t,\omega) \cdot \bar{\gamma}(y,z).
\end{align*}
Then $\alpha$ and $\sigma$ are $\calb \calf \otimes \calb(H)$-measurable functions, and $\gamma$ is a $\calb \calf \otimes \calb(H) \otimes \calz$-measurable function. Moreover, let $\delta : [0,T] \times \Omega \times H \times Z \to H$ be an arbitrary $\calb \calf \otimes \calb(H) \otimes \calz$-measurable function. Then conditions (\ref{chi-growth})--(\ref{LG-gamma}) from Proposition \ref{prop-SPDE} are satisfied with $f := (L^2 \vee C^2 \vee 1) ( f_{\alpha} + f_{\sigma}^2 + f_{\gamma}^2 )$, $K := 1$ and $\chi := 0$. Consequently, for each $\calf_0$-measurable random variable $x_0 : \Omega \to H$ there exists a unique mild solution $X$ to the SPDE (\ref{SPDE}) with $X_0 = x_0$.
\end{example}

\end{document}